\newtheorem{thm}{Theorem}[section]
\newtheorem{cor}[thm]{Corollary}
\newtheorem{defn}[thm]{Definition}
\numberwithin{equation}{section}
\newtheorem{ill}[thm]{Example}
\begin{document}
	\begin{center}
		{\bf{An application to a system of  $(k,\rho)$-fractional Hilfer integral equations via a measure of noncompactness
		}}
	     \vspace{.5cm}
	     \\ Gete Umbrey$^{a}$, Riken Kaye$^{a}$, Drema Lhamu$^{a}$, Monkhum Khilak$^{a}$,     and Bhuban Chandra Deuri$^{a\ast}$
	     \vspace{.2cm}
	     
	     $^{a}$Department of Mathematics, Jawaharlal Nehru College Pasighat, East Siang-791103, Arunachal Pradesh, India\\
	     
	     $^{a,\ast}$Corresponding author. E-mail: bhuban.deuri@rgu.ac.in; bhuban.math@gmail.com\\
	     Contributing author. Email: 
	     geteumbrey@gmail.com;
	     riken.kaye@rgu.ac.in;   dremalhamu114@gmail.com  and  monkhum.khilak@rgu.ac.in\
	\end{center}
	
	\title{}
	\author{}
\begin{abstract} 
	In our study, Darbo's fixed point theorem(DFPT) has been extended and generalized using $\mathbb{H}$-class mappings and the measure of noncompactness. Utilizing this Darbo-type theorem, we provided a solvability result for a system of a $(k,\rho)$-fractional Hilfer integral equations, accompanied by an appropriate example to illustrate the findings.
	
	\vskip 0.5cm
	
	\textbf{Key Words:} Measure of noncompactness; Fractional Hilfer Integral; Fixed Point \\

	\textbf{MSC subject classification No: 26A33; 47H08; 47H09; 47H10}   
\end{abstract}
	
	\maketitle
	\pagestyle{myheadings}
	
	\markboth{\rightline {\scriptsize Brook}}
	{\leftline{\scriptsize }}
	
	\maketitle

	\section{ Introduction}
	Kuratowski's measure of noncompactness (MNC), introduced in 1930 \cite{od2}, has been extensively studied in various mathematical contexts. Different research papers have explored the application of this measure in different areas. Metwali et al. defined a novel MNC in variable exponent Lebesgue spaces, extending classical Lebesgue spaces and enabling the study of various equations  \cite{tro1}. Khokhar et al. proposed the concept of $\mathcal{A}$-condensing operators by utilizing the MNC, focusing on best proximity points and fractional differential equations \cite{tro2}. Telli et al. utilized the Kuratowski MNC in studying boundary-value problems for fractional differential equations with variable order and delays, showcasing the applicability of the measure in stability criteria  \cite{tro3}. 
	
	Caponetti et al.\cite{tro4} analyzed the Kuratowski measure of noncompactness(MNC) in spaces of vector-valued functions, leading to new criteria for compactness based on quantitative features. The MNC is crucial in various fixed point theorems, which are instrumental in exploring solutions to different equations. For instance, the MNC helps in identifying solutions for nonlinear fractional integral equations within Banach spaces, as evidenced by Golshan \cite{tro8}, Deb et al. \cite{tro9}, Metwali \cite{tro1}, and Gabeleh et al.\cite{tro11}. Their research demonstrates the use of MNC in Lebesgue spaces and sequence spaces to confirm the existence of solutions via fixed point theorems. Moreover, combining the MNC with other criteria, like the Hausdorff measure, enhances the effectiveness of fixed point theorems in analyzing the solvability of integral equations, highlighting the synergy between fixed point theorems and the MNC.

	G. Darbo\cite{od5} extended Schauder's fixed point theorem by integrating Kuratowski's MNC into his framework. The MNC has proven vital in broadening Darbo's theorem across various mathematical contexts. Research by Hammad et al. \cite{tro5} introduced a new fixed point theorem that expands Darbo's theorem with the MNC, while Taoudi \cite{tro6} generalized Darbo's principle using a monotone MNC with specific properties. Additionally, Salem et al. \cite{tro7} applied the MNC within Banach spaces to address the existence of solutions for fractional Sturm–Liouville operators using fixed point theorems, including Darbo's. Collectively, these studies underscore the MNC's importance in extending DFPT and solving diverse mathematical problems.
	
	The DFPT, a fundamental mathematical concept, has been extensively studied and generalized. It establishes the existence of fixed points for certain mappings in various spaces. Extensions and generalizations of Darbo's theorem include proofs of solution existence and stability for fractional integral equations \cite{tro13, tro12}, new generalizations with relaxed assumptions, and applications in Banach spaces with monotone MNC and convex mappings \cite{tro14, tro6}. Additionally, new contraction types, such as the F-Darbo contraction, have been proposed, offering broader results and applications for solving integral equations  \cite{tro15}.

	J. Bana\'{s} \cite{Banas1} introduced the MNC in 1980, and its study in Banach spaces has since become a key research area. These measures play a crucial role in determining the existence of solutions to integral boundary value problems \cite{tro16}, studying the representation of an MNC and their applications in Banach spaces \cite{tro17}, exploring interpolation of an MNC of polynomials on Banach spaces \cite{tro18}, examining solvability conditions for fractional integral equations in Banach spaces using the theory of an MNC \cite{tro19}, and applying the MNC in characterizing classes of compact operators, solving integral equations, and establishing the existence of optimal solutions for systems of integro-differentials in Banach spaces \cite{tro11}. These research efforts collectively contribute to a deeper understanding of MNCs and their diverse applications in various mathematical contexts.
	
	Inspired and motivated by \cite{Moti1, Moti2, Moti3, Moti4}, in the context of an MNC. In this paper, we generalized well-known theorems, namely, DFPT and noncompactness measures used in Banach space. Herein, we presented a solvability result of a system of $(k,\rho)$-fractional Hilfer integral equations and a suitable example using the Darbo type theorem.

	\par Let  $\left( \mathscr{B}, \parallel . \parallel \right) $ be a real Banach space and consider the closed ball
	$\mathtt{B} (\omega,r)=\left\lbrace \rho \in \mathscr{B}:\parallel \rho-\omega \parallel \leq r \right\rbrace.$ If $\mathscr{E}(\neq \phi) \subseteq \mathscr{B}.$ we define:
	
	\begin{itemize}
		\item
		$\Re=(-\infty, \infty),$
		\item $\Re_{+}=\left[ 0, \infty \right) ,$
		\item $\bar{\mathscr{E}}$ as the closure of $\mathscr{E},$
		\item $Conv{\mathscr{E}}$ as the convex hull of $\mathscr{E},$
		\item $\mathscr{M}_{\mathscr{B}}$ as the collection of all non-void, bounded subsets of $\mathscr{B},$
		\\	also
		\item $\mathscr{N}_{\mathscr{B}}$ as the collection of all relatively compact sets.
	\end{itemize}

	We provide the below definition of MNC, which is referenced in \cite{Banas1}.
	\begin{defn}\label{def}
		A mapping $\mathscr{Q} :\mathscr{M}_{\mathscr{B}} \rightarrow \Re_{+}$ is  called a MNC in the Banach space $\mathscr{B}$ if it satisfies the following axioms:
		\begin{enumerate}
			\item[(i)] For any $\mathscr{E} \in \mathscr{M}_{\mathscr{B}},$ if
			$\mathscr{Q}(\mathscr{E})=0$ gives $\mathscr{E}$ is relatively compact.
			\item[(ii)]  ker $\mathscr{Q} = \left\lbrace \mathscr{E} \in \mathscr{M}_{\mathscr{B}}: \mathscr{Q}\left( \mathscr{E}\right)=0   \right\rbrace$ which is non-empty and $ \mathscr{N}_{\mathscr{B}}\supset$ ker $\mathscr{Q}.$ 
			\item[(iii)] $\mathscr{E} \subseteq \mathscr{E}_{1} \implies \mathscr{Q}\left( \mathscr{E}\right) \leq \mathscr{Q}\left( \mathscr{E}_{1}\right).$
			\item[(iv)] $\mathscr{Q}\left( \bar{\mathscr{E}}\right)=\mathscr{Q}\left( \mathscr{E}\right).$
			\item[(v)] $\mathscr{Q}\left( Conv \mathscr{E}\right)=\mathscr{Q}\left( \mathscr{E}\right).$
			\item[(vi)] For any $\mathscr{L} \in \left[ 0, 1 \right],$ we have $\mathscr{Q}\left( \mathscr{L} \mathscr{E} +\left(1- \mathscr{L} \right)\mathscr{E}_{1} \right) \leq \mathscr{L} \mathscr{Q}\left( \mathscr{E}\right)+\left(1- \mathscr{L} \right)\mathscr{Q}\left( \mathscr{E}_{1}\right).$ 
			\item[(vii)] If $\mathscr{E}_{k} \in \mathscr{M}_{\mathscr{B}}\; are\; closed \:(\mathscr{E}_{k}= \bar{\mathscr{E}}_{k})\; and\;	\mathscr{E}_{k+1} \subset \mathscr{E}_{k}$ for $k=1,2,3,\dots$ and if $\lim\limits_{k \rightarrow \infty}\mathscr{Q}\left( \mathscr{E}_{k}\right)=0,$ then $\bigcap_{k=1}^{\infty}\mathscr{E}_{k} \neq \emptyset.$ 
		\end{enumerate}
	\end{defn}
	\par The \textit{kernel of measure} $\mathscr{Q}$ is defined as the family $ker \mathscr{Q} .$ Since $\mathscr{Q}(\mathscr{E}_{\infty}) \leq \mathscr{Q}(\mathscr{E}_{k})$ for all $k,$ it follows that $\mathscr{Q}(\mathscr{E}_{\infty})=0,$  $\Rightarrow\mathscr{E}_{\infty}=\bigcap_{k=1}^{\infty}\mathscr{E}_{k} \in ker \mathscr{Q}.$

	\subsection*{Some key theorems and definitions}
	\par We recall the following key theorems:
	\begin{thm}(\;$\mathtt{Shauder}$,\;\cite{sh}\; )\label{s1}
		Suppose that $\mathscr{P}$ be a nonempty, bounded,closed and convex subset$(\mathcal{NBCCS})$ of a Banach space $\mathscr{Y}$ and, the mapping $\mathscr{F}: \mathscr{P} \rightarrow \mathscr{P}$ is a continuous  with compact. So, $\mathscr{F}$ has atleast one fixed point.
	\end{thm}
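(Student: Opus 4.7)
The plan is to prove Schauder's theorem by reducing the infinite-dimensional fixed-point problem to a sequence of finite-dimensional problems and then invoking Brouwer's fixed point theorem. The hypothesis that $\mathscr{F}$ is compact is precisely what enables this reduction: since $\overline{\mathscr{F}(\mathscr{P})}$ is compact in $\mathscr{Y}$, it is totally bounded, so for every integer $n \ge 1$ I may select a finite $\tfrac{1}{n}$-net $\{y_1^{(n)},\ldots,y_{k_n}^{(n)}\}\subseteq \mathscr{F}(\mathscr{P}) \subseteq \mathscr{P}$.

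For each $n$, I would form the finite-dimensional convex body $\mathscr{P}_n = Conv\{y_1^{(n)},\ldots,y_{k_n}^{(n)}\}$, which lies in $\mathscr{P}$ by convexity, and define the Schauder projection
$$\pi_n(y) \;=\; \frac{\sum_{i=1}^{k_n}\mu_i^{(n)}(y)\, y_i^{(n)}}{\sum_{i=1}^{k_n}\mu_i^{(n)}(y)}, \qquad \mu_i^{(n)}(y) \;=\; \max\!\bigl\{0,\; \tfrac{1}{n} - \|y - y_i^{(n)}\|\bigr\}.$$
The denominator is strictly positive on $\overline{\mathscr{F}(\mathscr{P})}$ by the net property, so $\pi_n$ is continuous from $\overline{\mathscr{F}(\mathscr{P})}$ into $\mathscr{P}_n$ and satisfies $\|\pi_n(y)-y\|\le \tfrac{1}{n}$ uniformly. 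The composition $\mathscr{F}_n = \pi_n \circ \mathscr{F}\colon \mathscr{P}_n \to \mathscr{P}_n$ is then a continuous self-map of a nonempty compact convex subset of a finite-dimensional subspace, so Brouwer's fixed point theorem provides $x_n \in \mathscr{P}_n$ with $\mathscr{F}_n(x_n)=x_n$.

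To finish, I would exploit the compactness of $\overline{\mathscr{F}(\mathscr{P})}$ to extract a subsequence with $\mathscr{F}(x_{n_j})\to y^{\ast}$. The approximation estimate
$$\|x_n - \mathscr{F}(x_n)\| \;=\; \|\pi_n(\mathscr{F}(x_n)) - \mathscr{F}(x_n)\| \;\le\; \tfrac{1}{n}$$
forces $x_{n_j}\to y^{\ast}$ as well; closedness of $\mathscr{P}$ gives $y^{\ast}\in \mathscr{P}$, and continuity of $\mathscr{F}$ then yields $\mathscr{F}(y^{\ast})=y^{\ast}$.

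The main obstacle I anticipate is the clean construction of the Schauder projection: one must verify simultaneously that $\pi_n$ takes values in $\mathscr{P}_n$ (so that $\mathscr{F}_n$ is a genuine self-map, using convexity), that it is continuous in $y$ (the denominator never vanishes thanks to the $\tfrac{1}{n}$-net property), and that it approximates the identity at rate $1/n$ on the compact set $\overline{\mathscr{F}(\mathscr{P})}$. Once this technical step is secured, the appeal to Brouwer and the compactness-based passage to the limit are routine.
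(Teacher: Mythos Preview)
Your argument is the classical and correct proof of Schauder's theorem via Schauder projections and Brouwer's fixed point theorem; the construction of $\pi_n$, the self-mapping property of $\mathscr{F}_n$ on $\mathscr{P}_n$, and the compactness-based limit passage are all handled properly.

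However, there is nothing in the paper to compare it against: Theorem~\ref{s1} is merely \emph{stated} with a citation to \cite{sh} and is not proved anywhere in the manuscript. The authors treat Schauder's theorem as a known prerequisite, invoking it as a black box inside the proof of Theorem~\ref{th1} (once a compact invariant set $\mathscr{A}_{\mathscr{N}}$ or $\mathscr{A}_{\infty}$ has been produced). So your proposal supplies a proof where the paper gives none; it is correct, but strictly speaking it goes beyond what the paper itself does for this statement.
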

	\begin{thm}(\;$\mathtt{Darbo}$,\;\cite{od5}\;)\label{l1}
		Suppose that $\mathscr{P}$ be a $\mathcal{NBCCS}$ of the Banach space $\mathscr{Y}$ and, the self mapping $\mathscr{F}: \mathscr{P} \rightarrow \mathscr{P}$ is a continuous  such that
		\[
		\mathscr{Q}(\mathscr{FC})\leq m\mathscr{Q}(\mathscr{C}),\; \mathscr{C}\subseteq \mathscr{P}.
		\]
		$Where$ $m\in \left[ 0,1\right) $,
		so the mapping $\mathscr{F}$ have  a fixed point.
	\end{thm}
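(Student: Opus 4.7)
The plan is to deduce Darbo's theorem from Schauder's theorem (Theorem \ref{s1}) by constructing a nested sequence of closed convex $\mathscr{F}$-invariant sets whose $\mathscr{Q}$-measures shrink to zero, and then showing their intersection is a nonempty compact convex set on which $\mathscr{F}$ acts.

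First I would define, recursively, $\mathscr{P}_0=\mathscr{P}$ and $\mathscr{P}_{n+1}=\overline{Conv}\bigl(\mathscr{F}(\mathscr{P}_n)\bigr)$ for $n\geq 0$. The initial observations are that each $\mathscr{P}_n$ is nonempty, bounded, closed and convex, and that the sequence is decreasing: since $\mathscr{F}(\mathscr{P})\subseteq \mathscr{P}$ and $\mathscr{P}$ is closed and convex, $\mathscr{P}_1\subseteq \mathscr{P}_0$, and an easy induction using monotonicity of the $\overline{Conv}$ operation propagates $\mathscr{P}_{n+1}\subseteq \mathscr{P}_n$. Moreover $\mathscr{F}(\mathscr{P}_n)\subseteq \mathscr{P}_{n+1}\subseteq \mathscr{P}_n$, so every $\mathscr{P}_n$ is $\mathscr{F}$-invariant.

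Next I would estimate the measure. Using properties (iv) and (v) of Definition \ref{def} and the hypothesis of the theorem,
\[
\mathscr{Q}(\mathscr{P}_{n+1})=\mathscr{Q}\bigl(\overline{Conv}\,\mathscr{F}(\mathscr{P}_n)\bigr)=\mathscr{Q}\bigl(\mathscr{F}(\mathscr{P}_n)\bigr)\leq m\,\mathscr{Q}(\mathscr{P}_n).
\]
Iterating gives $\mathscr{Q}(\mathscr{P}_n)\leq m^n \mathscr{Q}(\mathscr{P})$, and since $m\in[0,1)$, we obtain $\lim_{n\to\infty}\mathscr{Q}(\mathscr{P}_n)=0$. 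Axiom (vii) then ensures that $\mathscr{P}_\infty=\bigcap_{n=0}^{\infty}\mathscr{P}_n$ is nonempty, and the remark following Definition \ref{def} gives $\mathscr{P}_\infty\in\ker\mathscr{Q}$, so by axiom (ii), $\mathscr{P}_\infty$ is relatively compact. Being an intersection of closed sets it is also closed, hence compact; an intersection of convex sets, it is convex; and since each $\mathscr{P}_n$ is $\mathscr{F}$-invariant, so is $\mathscr{P}_\infty$.

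Finally I would apply Schauder's theorem (Theorem \ref{s1}) to the restriction $\mathscr{F}\!\mid_{\mathscr{P}_\infty}:\mathscr{P}_\infty\to\mathscr{P}_\infty$: $\mathscr{P}_\infty$ is a nonempty, closed, bounded, convex subset of $\mathscr{Y}$, the map is continuous as a restriction of a continuous map, and its image lies in a compact set. This yields a fixed point in $\mathscr{P}_\infty\subseteq\mathscr{P}$. The only mildly delicate step is the invocation of axiom (vii) together with verifying that $\mathscr{P}_\infty$ inherits compactness from $\ker\mathscr{Q}$; everything else is bookkeeping with the axioms and the definition of the iterates.
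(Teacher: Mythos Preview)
Your proof is correct and is essentially the classical argument. In the paper, Theorem~\ref{l1} is quoted without proof from \cite{od5}; the paper's own route to it is via Corollary~\ref{c1}, obtained by specializing the chain Theorem~\ref{th1} $\Rightarrow$ Theorem~\ref{th2} $\Rightarrow$ Theorem~\ref{th3} with $\varphi\equiv 0$. The engine of that chain, namely the proof of Theorem~\ref{th1}, is exactly the iteration you use: set $\mathscr{A}_0=\mathscr{M}$, $\mathscr{A}_{p}=Conv(\mathscr{F}\mathscr{A}_{p-1})$, show the $\mathscr{Q}$-values decrease to $0$, invoke axiom~(vii), and finish with Schauder. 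So your direct argument and the paper's generalized-then-specialized argument coincide at the core; you simply bypass the auxiliary functions $\upsilon,\gamma,\mathscr{H},\varphi$, which are irrelevant when the contraction is already linear in $\mathscr{Q}$. One cosmetic difference: the paper writes $Conv$ rather than $\overline{Conv}$ in the iterates, but since axiom~(vii) requires closed sets this is a minor imprecision on the paper's side, and your version is the cleaner one.
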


	\par In order to show an expansion of Darbo's FPT, the following related definitions are required :
	
	Let $\Gamma$ be the class of all functions $\gamma : \Re_{+} \rightarrow \Re_{+},$ so $\gamma(0)=0$ and $\gamma(\theta)>0$ for $\theta>0.$
	so that 
	\begin{defn}(\cite{das1})\label{d2}
		Denote by $\Upsilon$ the class of all $\upsilon : \Re_{+}\rightarrow \Re_{+},$ so 
		\begin{enumerate}
			\item $\upsilon(\sigma) = 0 \Leftrightarrow \sigma = 0.$
			\item $\upsilon$ is non-decreasing and continuous.
		\end{enumerate}
	\end{defn}
	\begin{defn}(\cite{das1})\label{d3}
		Denote by $\mathbb{H}$ the class of all  $\mathscr{H}: \Re_{+}^{2}  \rightarrow \Re_{+}, $ so satisfying :
		\begin{enumerate}
			\item $\mathscr{H}(z_1,z_2)\geq \max \left\lbrace z_1,z_2\right\rbrace $ for $z_1,z_2 \geq 0.$
			\item $\mathscr{H}$ is a continuous
			\item $\mathscr{H}(z_1+z_2,x_1+x_2) \leq \mathscr{H}(z_1,x_1)+\mathscr{H}(z_2,x_2).$
		\end{enumerate}
		e.g $\mathscr{H}(z_1,z_2) = z_1+z_2.$
	\end{defn}

	\section{The Main Result (FPT)}
	\begin{thm}\label{th1}
		Assume that $\mathscr{M}$ is a $\mathcal{NBCCS}$ of the Banach space $\mathscr{Y}$ and  $\mathscr{F}:\mathscr{M} \rightarrow \mathscr{M}$ is a continuous function such that
		\begin{equation}\label{eq1}
			\upsilon[ \mathscr{H}\left(\mathscr{Q}( \mathscr{F}\mathscr{T}),\varphi(\mathscr{Q}( \mathscr{F}\mathscr{T}))\right)] \leq 
			\upsilon[ \mathscr{H}\left(\mathscr{Q}( \mathscr{T}),\varphi(\mathscr{Q}( \mathscr{T}))\right)]
			-\gamma[ \mathscr{H}\left(\mathscr{Q}( \mathscr{T}),\varphi(\mathscr{Q}( \mathscr{T}))\right)]
		\end{equation}
		where $\mathscr{T} \subset \mathscr{M}$ and $\mathscr{Q}$ is an arbitrary $MNC$  and $ \upsilon \in \Upsilon, \;\gamma \in \Gamma,\; \mathscr{H} \in \mathbb{H}\;$  also a function $ \varphi : \Re_{+} \rightarrow \Re_{+}$ is nondecreasing continuous .
		So that $\mathscr{F}$ has atleast one fixed point in $\mathscr{M}.$
	\end{thm}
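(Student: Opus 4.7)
The plan is to mimic Banaś's iterative argument for classical Darbo-type theorems. I will build a nested sequence of admissible sets by setting $\mathscr{M}_{0}=\mathscr{M}$ and, inductively, $\mathscr{M}_{n+1}=\overline{\operatorname{Conv}}(\mathscr{F}\mathscr{M}_{n})$. Since $\mathscr{F}:\mathscr{M}\to\mathscr{M}$ and $\mathscr{M}$ is closed and convex, a straightforward induction gives $\mathscr{M}_{n+1}\subseteq\mathscr{M}_{n}$, and each $\mathscr{M}_{n}$ is nonempty, bounded, closed and convex. If some $\mathscr{M}_{n_{0}}$ is already relatively compact, Schauder's Theorem~\ref{s1} applied to $\mathscr{F}|_{\mathscr{M}_{n_{0}}}$ immediately furnishes a fixed point, so I will assume $\mathscr{Q}(\mathscr{M}_{n})>0$ for every $n$.

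Next, I would apply hypothesis~(\ref{eq1}) with $\mathscr{T}=\mathscr{M}_{n}$, combining axioms (iv) and (v) of Definition~\ref{def} to rewrite $\mathscr{Q}(\mathscr{F}\mathscr{M}_{n})=\mathscr{Q}(\overline{\operatorname{Conv}}\,\mathscr{F}\mathscr{M}_{n})=\mathscr{Q}(\mathscr{M}_{n+1})$. Writing $q_{n}=\mathscr{Q}(\mathscr{M}_{n})$ and $b_{n}=\mathscr{H}(q_{n},\varphi(q_{n}))$, (\ref{eq1}) becomes $\upsilon(b_{n+1})\le \upsilon(b_{n})-\gamma(b_{n})$. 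Axiom (iii) together with $\mathscr{M}_{n+1}\subseteq\mathscr{M}_{n}$ yields $q_{n+1}\le q_{n}$, so $q_{n}\downarrow \ell\ge 0$; by continuity of $\varphi$ and $\mathscr{H}$, $b_{n}\to b_{\infty}:=\mathscr{H}(\ell,\varphi(\ell))$. The recursion shows $\upsilon(b_{n})$ is nonincreasing and nonnegative, hence convergent, which forces $\gamma(b_{n})\to 0$. Because $\gamma(\theta)>0$ for $\theta>0$, one concludes $b_{\infty}=0$; but $\mathscr{H}(z_{1},z_{2})\ge \max\{z_{1},z_{2}\}$, so $\ell=0$, i.e.\ $q_{n}=\mathscr{Q}(\mathscr{M}_{n})\to 0$.

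Having proved $\mathscr{Q}(\mathscr{M}_{n})\to 0$, axiom (vii) yields that $\mathscr{M}_{\infty}:=\bigcap_{n\ge 1}\mathscr{M}_{n}$ is nonempty. It is closed and convex as the intersection of closed convex sets, and the remark following Definition~\ref{def} gives $\mathscr{M}_{\infty}\in \ker\mathscr{Q}$, so $\mathscr{M}_{\infty}$ is relatively compact (and, being closed, compact). Invariance $\mathscr{F}\mathscr{M}_{\infty}\subseteq\mathscr{M}_{\infty}$ follows from the chain $\mathscr{F}\mathscr{M}_{\infty}\subseteq \mathscr{F}\mathscr{M}_{n}\subseteq \overline{\operatorname{Conv}}(\mathscr{F}\mathscr{M}_{n})=\mathscr{M}_{n+1}$ for every $n$. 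Schauder's Theorem~\ref{s1} then delivers a fixed point of $\mathscr{F}$ in $\mathscr{M}_{\infty}\subseteq\mathscr{M}$.

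The main technical obstacle is the step extracting $b_{n}\to 0$ from $\upsilon(b_{n+1})\le \upsilon(b_{n})-\gamma(b_{n})$, since $\gamma$ is only assumed to be positive off zero, with no stated continuity or monotonicity. I would handle this by a contradiction argument: passing to a subsequence along which $q_{n_{k}}\to \ell>0$, the continuity of $\mathscr{H}$ and $\varphi$ gives $b_{n_{k}}\to b_{\infty}>0$, so $\gamma(b_{n_{k}})$ must be bounded below by a positive quantity (invoking whatever regularity one reads into $\gamma$, or replacing it by $\liminf$), contradicting $\sum_{n}\gamma(b_{n})<\infty$ that follows by telescoping. This is the delicate analytic hinge on which the whole theorem turns; the rest is a direct assembly of the MNC axioms and Schauder's theorem.
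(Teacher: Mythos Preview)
Your proposal is correct and follows essentially the same route as the paper: build the nested sequence $\mathscr{M}_{n+1}=\overline{\operatorname{Conv}}(\mathscr{F}\mathscr{M}_{n})$, use~(\ref{eq1}) together with axioms (iv)--(v) to obtain a decreasing sequence $\upsilon(b_{n})$, force $\mathscr{Q}(\mathscr{M}_{n})\to 0$, and finish with axiom~(vii) and Schauder. The paper handles the ``delicate analytic hinge'' you flag---passing from $\upsilon(b_{n})\downarrow\varpi$ to $\varpi=0$---in a single line (``$\varpi<\varpi$''), so your explicit discussion of the regularity needed on $\gamma$ is in fact more scrupulous than the original.
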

	\begin{proof}
		Assume that a sequence $\left\lbrace \mathscr{A}_{p}\right\rbrace _{p=1}^{\infty}$ via $\mathscr{A}_{0}=\mathscr{A}$
		and $\mathscr{A}_{p}=Conv(\mathscr{F}\mathscr{A}_{p-1})$ for $p \in \mathbb{N}.$  
		Proceeding in the same way gives 
		$\mathscr{A}_{0} \supseteq\mathscr{A}_{1} \supseteq \mathscr{A}_{2}\supseteq \mathscr{A}_{3}\supseteq \ldots \supseteq \mathscr{A}_{p}\supseteq \mathscr{A}_{p+1}\supseteq \ldots.$

		\par If there exists a $\mathscr{N}\in \mathbb{N}$ satisfying $\mathscr{H}\left(\mathscr{Q}( \mathscr{A}_{\mathscr{N}}),\varphi(\mathscr{Q}( \mathscr{A}_{\mathscr{N}}))\right) =0.$ which gives
		$\mathscr{Q}( \mathscr{A}_{\mathscr{N}})=0$ , so $\mathscr{A}_{\mathscr{N}}$ is a compact set. Then, by using Schauder's  theorem we obtain that $\mathscr{F}$ has a fixed point in $\mathscr{A}.$

		\par Again, if $\mathscr{H}\left(\mathscr{Q}( \mathscr{A}_{p}),\varphi(\mathscr{Q}( \mathscr{A}_{p}))\right)>0,\;\; p \in \mathbb{N}.$

		\par Now, for  $\; p \in \mathbb{N},$  
		\begin{align*}
			& \upsilon[ \mathscr{H}\left(\mathscr{Q}( \mathscr{A}_{p+1}),\varphi(\mathscr{Q}( \mathscr{A}_{p+1}))\right)]\\
			& = \upsilon[ \mathscr{H}\left(\mathscr{Q}( Conv\mathscr{F}\mathscr{A}_{p}),\varphi(\mathscr{Q}(Conv\mathscr{F} \mathscr{A}_{p}))\right)]\\
			&= \upsilon[ \mathscr{H}\left(\mathscr{Q}( \mathscr{F}\mathscr{A}_{p}),\varphi(\mathscr{Q}(\mathscr{F} \mathscr{A}_{p}))\right)]\\
			& \leq \upsilon[ \mathscr{H}\left(\mathscr{Q}( \mathscr{A}_{p}),\varphi(\mathscr{Q}( \mathscr{A}_{p}))\right)]- \gamma[ \mathscr{H}\left(\mathscr{Q}( \mathscr{A}_{p}),\varphi(\mathscr{Q}( \mathscr{A}_{p}))\right)]\\
			& < \upsilon[ \mathscr{H}\left(\mathscr{Q}( \mathscr{A}_{p}),\varphi(\mathscr{Q}( \mathscr{A}_{p}))\right)].
		\end{align*}
		Since, $\left\lbrace \upsilon[ \mathscr{H}\left(\mathscr{Q}( \mathscr{A}_{p}),\varphi(\mathscr{Q}( \mathscr{A}_{p}))\right)]\right\rbrace _{p=1}^{\infty}  $ is a positive decreasing sequence  that is bounded below.\\
		So, $\lim\limits_{p \rightarrow \infty} \upsilon[ \mathscr{H}\left(\mathscr{Q}( \mathscr{A}_{p}),\varphi(\mathscr{Q}( \mathscr{A}_{p}))\right)]=\varpi\geqslant 0.$

		If possible let $\varpi>0.$ As $p \rightarrow \infty$, we have
		\[\varpi<\varpi.\]
		Which is a contradictions.\\
		So,
		$\upsilon[ \mathscr{H}\left(\mathscr{Q}( \mathscr{A}_{p}),\varphi(\mathscr{Q}( \mathscr{A}_{p}))\right)] \rightarrow 0 $ as $p \rightarrow \infty.$
		\\ This implies that\\
		$ \mathscr{H}\left(\mathscr{Q}( \mathscr{A}_{p}),\varphi(\mathscr{Q}( \mathscr{A}_{p}))\right) \rightarrow 0 $ as $p \rightarrow \infty.$\\
		Using the property of $\mathscr{H} $, we get\\ 
		$ \mathscr{Q}( \mathscr{A}_{p}) \rightarrow 0 $ as $p \rightarrow \infty.$

		\par Since $\mathscr{A}_{p}\supseteq \mathscr{A}_{p+1},$ with definition\; \ref{def} concluding that $\mathscr{A}_{\infty}=\bigcap_{p=1}^{\infty}\mathscr{A}_{p}$ is nonempty, closed and convex subset of $\mathscr{A}$ and $\mathscr{A}_{\infty}$ is invariant under $\mathscr{F}$ .
		\\ Thus, Schauder’s FPT (theorem \;\ref{s1})\;concludes\;that $\mathscr{F}$\;has\;a\;fixed\;point\;in\;$\mathscr{A}.$
		Thus, the completed proof.
	\end{proof}

	\begin{thm}\label{th2}
		Assume that $\mathscr{M}$ is a $\mathcal{NBCCS}$ of the Banach space $\mathscr{Y}$ and  $\mathscr{F}:\mathscr{M} \rightarrow \mathscr{M}$ is a continuous function  such that
		\begin{equation}\label{con2}
			\mathscr{H}\left(\mathscr{Q}( \mathscr{F}\mathscr{T}),\varphi(\mathscr{Q}( \mathscr{F}\mathscr{T}))\right) \leq 
			(1-2\mathscr{G})\mathscr{H}\left(\mathscr{Q}( \mathscr{T}),\varphi(\mathscr{Q}( \mathscr{T}))\right)
		\end{equation}
		where $\mathscr{T} \subset \mathscr{M}$ and $\mathscr{Q}$ is an arbitrary $MNC$  and $ \mathscr{H} \in \mathbb{H}\;$  also a function $ \varphi : \Re_{+} \rightarrow \Re_{+}$ is nondecreasing continuous.
		So that $\mathscr{F}$ has atleast one fixed point in $\mathscr{M}.$
	\end{thm}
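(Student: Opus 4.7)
The plan is to deduce Theorem \ref{th2} as an immediate corollary of Theorem \ref{th1} by selecting appropriate members of the function classes $\Upsilon$ and $\Gamma$. The key observation is that the hypothesis \eqref{con2} already has the shape of a linear contraction applied to $\mathscr{H}\left(\mathscr{Q}(\mathscr{T}),\varphi(\mathscr{Q}(\mathscr{T}))\right)$, so a natural reduction is to take $\upsilon$ as the identity and $\gamma$ as a linear function. This sidesteps having to rerun the iterative argument with $\mathscr{A}_{0}=\mathscr{A}$, $\mathscr{A}_{p}=\mathrm{Conv}(\mathscr{F}\mathscr{A}_{p-1})$ from scratch.

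Concretely, I would set $\upsilon(t)=t$ for every $t\in\Re_{+}$; this function clearly belongs to $\Upsilon$ since $\upsilon(t)=0\Leftrightarrow t=0$ and $\upsilon$ is continuous and non-decreasing. Next I would put $\gamma(t)=2\mathscr{G}\,t$, which lies in $\Gamma$ provided $\mathscr{G}\in(0,\tfrac{1}{2}]$, since then $\gamma(0)=0$ and $\gamma(t)>0$ for $t>0$. Under these choices, the right-hand side of \eqref{eq1} collapses to
\[
\mathscr{H}\!\left(\mathscr{Q}(\mathscr{T}),\varphi(\mathscr{Q}(\mathscr{T}))\right)-2\mathscr{G}\,\mathscr{H}\!\left(\mathscr{Q}(\mathscr{T}),\varphi(\mathscr{Q}(\mathscr{T}))\right)=(1-2\mathscr{G})\,\mathscr{H}\!\left(\mathscr{Q}(\mathscr{T}),\varphi(\mathscr{Q}(\mathscr{T}))\right),
\]
and the left-hand side becomes $\mathscr{H}\left(\mathscr{Q}(\mathscr{F}\mathscr{T}),\varphi(\mathscr{Q}(\mathscr{F}\mathscr{T}))\right)$, so \eqref{eq1} reduces exactly to the contractive assumption \eqref{con2} of Theorem \ref{th2}.

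All remaining hypotheses, namely that $\mathscr{M}$ is a $\mathcal{NBCCS}$ of $\mathscr{Y}$, that $\mathscr{F}:\mathscr{M}\to\mathscr{M}$ is continuous, that $\mathscr{Q}$ is an arbitrary MNC, that $\mathscr{H}\in\mathbb{H}$, and that $\varphi:\Re_{+}\to\Re_{+}$ is non-decreasing and continuous, are common to both statements. Therefore Theorem \ref{th1} applies and delivers a fixed point of $\mathscr{F}$ in $\mathscr{M}$.

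No serious obstacle is expected; the argument is essentially a verification. The only delicate point I would flag is the admissibility of the constant $\mathscr{G}$: for $\gamma$ to lie in $\Gamma$ and for the factor $(1-2\mathscr{G})$ to sit in $[0,1)$ — which is exactly what makes the decreasing sequence $\{\upsilon[\mathscr{H}(\mathscr{Q}(\mathscr{A}_{p}),\varphi(\mathscr{Q}(\mathscr{A}_{p})))]\}$ in the proof of Theorem \ref{th1} tend to zero — one needs $\mathscr{G}\in(0,\tfrac{1}{2}]$ (or at worst $\mathscr{G}\in(0,\tfrac{1}{2})$ to secure strict decrease). I would state this standing assumption explicitly at the start of the proof to keep the reduction to Theorem \ref{th1} unambiguous.
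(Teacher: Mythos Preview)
Your proposal is correct and matches the paper's own proof: the paper likewise deduces Theorem~\ref{th2} from Theorem~\ref{th1} by taking linear $\upsilon$ and $\gamma$, specifically $\upsilon(x)=\tfrac{x}{2}$ and $\gamma(x)=\mathscr{G}x$, which after multiplying by $2$ is exactly your choice $\upsilon(t)=t$, $\gamma(t)=2\mathscr{G}t$. Your remark that one needs $\mathscr{G}\in(0,\tfrac{1}{2}]$ for $(1-2\mathscr{G})\in[0,1)$ is a worthwhile clarification that the paper leaves implicit (it writes $0<\mathscr{G}<1$).
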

	\begin{proof}
		Setting $\upsilon(x)=\frac{x}{2}$ and $\gamma(x)=\mathscr{G}x, \; 0<\mathscr{G}<1$ in the Theorem \; \ref{th1}. This yields Theorem \ref{th2}.
	\end{proof}
	\begin{thm}\label{th3}
		Assume that $\mathscr{M}$ is a $\mathcal{NBCCS}$ of the Banach space $\mathscr{Y}$ and  $\mathscr{F}:\mathscr{M} \rightarrow \mathscr{M}$ is a continuous function  such that
		\begin{equation}\label{con3}
			\mathscr{Q}( \mathscr{F}\mathscr{T})+\varphi(\mathscr{Q}( \mathscr{F}\mathscr{T})) \leq 
			(1-2\mathscr{G})\left(\mathscr{Q}( \mathscr{T})+\varphi(\mathscr{Q}( \mathscr{T}))\right)
		\end{equation}
		where $\mathscr{T} \subset \mathscr{M}$ and $\mathscr{Q}$ is an arbitrary $MNC$  also  a  function $ \varphi : \Re_{+} \rightarrow \Re_{+}$ is nondecreasing continuous.
		So that $\mathscr{F}$ has atleast one fixed point in $\mathscr{M}.$
	\end{thm}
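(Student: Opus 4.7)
The approach is to deduce Theorem \ref{th3} as an immediate specialization of Theorem \ref{th2} by choosing the simplest admissible member of the class $\mathbb{H}$. Specifically, I would take $\mathscr{H}:\Re_{+}^{2}\to\Re_{+}$ defined by $\mathscr{H}(z_{1},z_{2})=z_{1}+z_{2}$, which is precisely the example flagged right after Definition \ref{d3}.

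The first step is therefore to verify quickly that this $\mathscr{H}$ lies in $\mathbb{H}$: it satisfies $\mathscr{H}(z_{1},z_{2})=z_{1}+z_{2}\ge\max\{z_{1},z_{2}\}$ for $z_{1},z_{2}\ge 0$, it is obviously continuous on $\Re_{+}^{2}$, and it is additive, so the subadditivity condition $\mathscr{H}(z_{1}+z_{2},x_{1}+x_{2})\le\mathscr{H}(z_{1},x_{1})+\mathscr{H}(z_{2},x_{2})$ holds with equality. All three hypotheses of Definition \ref{d3} are thus met.

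Next, I would substitute this $\mathscr{H}$ into condition \eqref{con2} of Theorem \ref{th2}. Since $\mathscr{H}(\mathscr{Q}(\mathscr{F}\mathscr{T}),\varphi(\mathscr{Q}(\mathscr{F}\mathscr{T})))=\mathscr{Q}(\mathscr{F}\mathscr{T})+\varphi(\mathscr{Q}(\mathscr{F}\mathscr{T}))$ and similarly on the right-hand side, the contraction inequality \eqref{con2} becomes exactly the inequality \eqref{con3} assumed here. The hypotheses on $\mathscr{M}$, $\mathscr{F}$, $\mathscr{Q}$ and $\varphi$ are identical in the two statements, so Theorem \ref{th2} applies and delivers a fixed point of $\mathscr{F}$ in $\mathscr{M}$.

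I do not anticipate any genuine obstacle: the whole argument is a one-line reduction once $\mathscr{H}$ is chosen. The only thing worth being careful about is making sure the reader sees that the chosen $\mathscr{H}$ genuinely belongs to $\mathbb{H}$ (so that Theorem \ref{th2} is being invoked legitimately) and that the constant $\mathscr{G}$ in \eqref{con3} plays exactly the same role as the $\mathscr{G}$ appearing in \eqref{con2}, with the same range $0<\mathscr{G}<1$ inherited from Theorem \ref{th2}.
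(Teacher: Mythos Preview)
Your proposal is correct and follows exactly the paper's own approach: the paper's proof consists of the single line ``Setting $\mathscr{H}(z_{1},z_{2})=z_{1}+z_{2}$ in Theorem~\ref{th2}'' to obtain Theorem~\ref{th3}. Your additional verification that this $\mathscr{H}$ actually belongs to the class $\mathbb{H}$ is a welcome elaboration that the paper omits.
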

	\begin{proof}
		Setting $\mathscr{H}(z_1,z_2) = z_1+z_2$ in the Theorem\; \ref{th2}. This yields Theorem \ref{con3}.
	\end{proof}
	
	\begin{cor}\label{c1}
		Assume that $\mathscr{M}$ be a $\mathcal{NBCCS}$ of the Banach space $\mathscr{Y}$ and $\mathscr{F}:\mathscr{M} \rightarrow \mathscr{M}$ is a continuous function such that
		\begin{equation}\label{con4}
			\mathscr{Q}( \mathscr{F}\mathscr{T}) \leq 
			m\mathscr{Q}( \mathscr{T})
		\end{equation}
		where $\mathscr{T} \subset \mathscr{M}$ and $\mathscr{Q}$ is an arbitrary $MNC.$
		So that $\mathscr{F}$ has atleast one fixed point in $\mathscr{M}.$
	\end{cor}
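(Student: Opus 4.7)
The plan is to derive Corollary \ref{c1} directly from Theorem \ref{th3} by specializing the auxiliary function $\varphi$ and the contraction constant $\mathscr{G}$. Specifically, I would take $\varphi \equiv 0$, i.e.\ the identically zero function on $\Re_{+}$. This function is continuous and (weakly) nondecreasing with values in $\Re_{+}$, so it satisfies the standing hypotheses imposed on $\varphi$ in Theorem \ref{th3}. With this choice, both $\varphi(\mathscr{Q}(\mathscr{F}\mathscr{T}))$ and $\varphi(\mathscr{Q}(\mathscr{T}))$ vanish and the inequality \eqref{con3} collapses to $\mathscr{Q}(\mathscr{F}\mathscr{T}) \leq (1-2\mathscr{G})\mathscr{Q}(\mathscr{T})$.

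Next I would match the contraction constants. Interpreting the constant $m$ from \eqref{con4} in the classical Darbo range $m \in [0,1)$, I set $\mathscr{G} = (1-m)/2$, which lies in $(0,1/2] \subset (0,1)$ and therefore meets the requirement $0 < \mathscr{G} < 1$ that is inherited from Theorem \ref{th2}. Under this choice, $1 - 2\mathscr{G} = m$, so the reduced inequality becomes exactly the hypothesis \eqref{con4} of the corollary. Since all remaining assumptions, namely that $\mathscr{M}$ is a $\mathcal{NBCCS}$ of $\mathscr{Y}$ and $\mathscr{F}:\mathscr{M}\to\mathscr{M}$ is continuous, carry over unchanged, Theorem \ref{th3} immediately delivers a fixed point of $\mathscr{F}$ in $\mathscr{M}$.

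I do not anticipate a substantial obstacle: the entire chain Theorem \ref{th1} $\Rightarrow$ Theorem \ref{th2} $\Rightarrow$ Theorem \ref{th3} is designed to specialize to the classical Darbo statement, and Corollary \ref{c1} is essentially Darbo's fixed point theorem (Theorem \ref{l1}) recovered from this hierarchy. The only two points that actually require verification are the admissibility of $\varphi \equiv 0$ within the class of allowed auxiliary functions and the range check $\mathscr{G} = (1-m)/2 \in (0,1)$, both of which are immediate. If one wished to emphasize the independence of the corollary from the unstated bound on $m$, a brief remark could be added noting that $m \in [0,1)$ is the natural (and only nontrivial) range, since $\mathscr{Q}$ is nonnegative and $m \geq 1$ would reduce the statement to a vacuous contraction hypothesis.
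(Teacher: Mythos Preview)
Your proposal is correct and follows essentially the same approach as the paper: the paper's proof simply reads ``Setting $\varphi(x)=0,\;0<m<1$ in Theorem~\ref{th3}. We obtain, DFPT.'' Your version is more carefully written, since you explicitly verify that $\varphi\equiv 0$ is admissible and that $\mathscr{G}=(1-m)/2\in(0,1)$ yields $1-2\mathscr{G}=m$, but the underlying derivation is identical.
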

	\begin{proof}
		Setting $\varphi(x) =0, \; 0<m<1$ in the Theorem\; \ref{th3}. We obtain, DFPT.
	\end{proof}

	
		\section{Application}
	\subsection{MNC on $\Bar{\mathds{C}}([0,\mathbb{T}])$:}
	Assume that  $\mathscr{B}=\Bar{\mathds{C}}(\Bar{\mathds{J}})=\Bar{\mathds{C}}([0,\mathbb{T}])$ is the space of all real valued continuous function defined on $\Bar{\mathds{J}}.$ Then,
	\[
	\parallel \mathscr{T} \parallel=\sup\left\lbrace \left| \mathscr{T}(z)\right|:z \in \Bar{\mathds{J}} \right\rbrace ,\; \mathscr{T}\in \mathscr{B}.
	\]
	
	Suppose $\mathscr{F}(\neq \phi) \subseteq \mathscr{B}$ is   bounded.\\ Fix  $\delta >0$ and $\mathscr{T} \in \mathscr{F}$ such that modulus of the continuity of $\mathscr{T}$ is defined as
	\[\mu(\mathscr{T},\delta)=\sup \left\lbrace \left| \mathscr{T}(z_{1})-\mathscr{T}(z_{2})\right|: z_{1},z_{2} \in \Bar{\mathds{J}} , \left| z_{1}-z_{2}\right|\leq \delta \right\rbrace.
	\]
	Moreover, we set
	\[ \mu(\mathscr{F},\delta)=\sup\left\lbrace \mu(\mathscr{T},\delta): \mathscr{T} \in \mathscr{F}\right\rbrace;~ \mu_{0}(\mathscr{F})=\lim\limits_{\delta \rightarrow 0}\mu(\mathscr{F},\delta).
	\]
	Here, the mapping $\mu_{0}$ is known as MNC in $\mathscr{B}$, with $\mathscr{Q}(\mathscr{F})=\frac{1}{2}\mu_{0}(\mathscr{F})$
	(see \cite{Banas1}) as the Hausdorff MNC $\mathscr{Q}$.

	\subsection{Existence results of a fractional integral equations:}
	Let $-\infty\leq a<b<\infty,$ $\;0<\rho,k,\gamma<1$ and also let $\phi$ be a continuous function. Then, defines the integral $(k,\rho)$-fractional Hilfer left integral is given by (we reffer \cite{Cap2,Cap3,Cap4}).
	\[
	(^{\rho}_{k}\mathscr{J}^{\gamma}_{a+}\phi)(x)=\frac{\rho^{1-\frac{\gamma}{k}}}{k\Gamma_{k}(\gamma)}\int_{a}^{x}\frac{t^{\rho-1}}{\left(x^{\rho}-t^{\rho} \right)^{1-\frac{\gamma}{k}}}\phi(t)dt ,\;\;\;x>a\;\;x \in [a,b].
	\]
	
	Given that $a=1$ and $b=\mathbb{T}$, we compute
	\[
	(^{\rho}_{k}\mathscr{J}^{\gamma}\phi)(x)=\frac{\rho^{1-\frac{\gamma}{k}}}{k\Gamma_{k}(\gamma)}\int_{1}^{x}\frac{t^{\rho-1}}{\left(x^{\rho}-t^{\rho} \right)^{1-\frac{\gamma}{k}}}\phi(t)dt ,\;\;\;x>a\;\;x \in [1,\mathbb{T}].
	\]

	In this study, we examine the system of fractional integral equations:
	
	\begin{equation}\label{eq75}
		\alpha(x)= F(x,\alpha(x))+\frac{\rho^{1-\frac{\gamma}{k}}\Psi(x,\alpha(x))}{k\Gamma_{k}(\gamma)}\int_{1}^{x}\frac{t^{\rho-1}}{\left(x^{\rho}-t^{\rho} \right)^{1-\frac{\gamma}{k}}}G(t,\alpha(t))dt ,\;\;\;x>a\;\;x \in [1,\mathbb{T}]
	\end{equation}
	and
	\begin{equation}\label{eq77}
		\beta(x)= \hat{F}(x,\beta(x))+\frac{\rho^{1-\frac{\gamma}{k}}\hat{\Psi}(x,\beta(x))}{k\Gamma_{k}(\gamma)}\int_{1}^{x}\frac{t^{\rho-1}}{\left(x^{\rho}-t^{\rho} \right)^{1-\frac{\gamma}{k}}}\hat{G}(t,\beta(t))dt ,\;\;\;x>a\;\;x \in [1,\mathbb{T}]
	\end{equation}
	where $x \in \Bar{\mathds{J}}=[1,\mathbb{T}]\;\;and\;\;0< \rho,k,\gamma<1.$\\
	Let  define 
	\[
	\mathscr{Q}_{r_{0}}=\left\lbrace \alpha\in \mathscr{B}: \parallel \alpha \parallel \leq r_{0} \right\rbrace \].
	Assume that
	\begin{enumerate}
		\item [(C1)]
		Let $F,\; \hat{F},\; \Psi,\; \hat{\Psi},\; G,\; \hat{G}:\Bar{\mathds{J}} \times \Re^{+} \rightarrow \Re$ be continuous and there exist constants $c_{1},\; \hat{c}_{1},\; c_{2},\; \hat{c}_{2},\; c_{3},\; \hat{c}_{3} \geq 0$ satisfying
		\[
		\left| F(x,\alpha(x))-F(x,\bar{\alpha}(x))\right|\leq c_{1} \left| \alpha(x)-\bar{\alpha}(x)\right|
		\]
		\[
		\left| \hat{F}(x,\beta(x))-\hat{F}(x,\bar{\beta}(x))\right|\leq \hat{c}_{1} \left| \beta(x)-\bar{\beta}(x)\right|
		\]
		\[
		\left| \Psi(x,\alpha(x))-\Psi(x,\bar{\alpha}(x))\right|\leq c_{2} \left| \alpha(x)-\bar{\alpha}(x)\right|
		\]
		\[
		\left| \hat{\Psi}(x,\beta(x))-\hat{\Psi}(x,\bar{\beta}(x))\right|\leq \hat{c}_{2} \left| \beta(x)-\bar{\beta}(x)\right|
		\]
		\[
		\left| G(x,\alpha(x))-G(x,\bar{\alpha}(x)) \right|\leq c_{3} \left| \alpha(x)-\bar{\alpha}(x)\right|
		\]
		and
		\[
		\left|  \hat{G}(x,\beta(x))-\hat{G}(x,\bar{\beta}(x)) \right|\leq \hat{c}_{3} \left| \beta(x)-\bar{\beta}(x)\right|
		\]
		Also, for all $x \in \Re^{+},$
		\[
		F(x,0)=0=\hat{F}(x,0)
		\]
		\[
		\Psi(x,0)=0=\hat{\Psi}(x,0)
		\]
		and
		\[
		G(x,0)=0=\hat{G}(x,0)
		\]
		\item[(C2)] The given below inequality has a positive result $r_{0}$ such that
		\begin{equation*}
			c_{1}+
			\frac{c_{2}c_{3}  r_{0}\rho^{-\frac{\gamma}{k}}}{\gamma \Gamma_{k}(\gamma)}\left( \mathbb{T}^{\rho}-1\right)^{\frac{\gamma}{k}}< 1
		\end{equation*}
		and 
		\begin{equation*}
			\hat{c}_{1}+
			\frac{\hat{c}_{2}\hat{c}_{3}  r_{0}\rho^{-\frac{\gamma}{k}}}{\gamma \Gamma_{k}(\gamma)}\left( \mathbb{T}^{\rho}-1\right)^{\frac{\gamma}{k}}< 1.
		\end{equation*}
	\end{enumerate}

	\begin{thm}\label{T74}
		If the assumptions $(C1)-(C2)$ holds, then the equation (\ref{eq75}) and (\ref{eq77}) have  solutions in $\mathscr{B}=\Bar{\mathds{C}}(\Bar{\mathds{J}}).$
	\end{thm}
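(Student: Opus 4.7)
The plan is to recast each equation as a fixed-point problem for a suitable operator on the closed ball $\mathscr{Q}_{r_{0}}\subset \mathscr{B}=\bar{\mathds{C}}(\bar{\mathds{J}})$ and then verify the hypotheses of Corollary \ref{c1} with $\mathscr{Q}=\tfrac{1}{2}\mu_{0}$, the Hausdorff MNC introduced above. Since (\ref{eq75}) and (\ref{eq77}) are completely decoupled and structurally identical (the second is obtained from the first by hatting every datum), I only sketch the argument for (\ref{eq75}); the treatment of (\ref{eq77}) is word-for-word the same. Define
\[
(\mathscr{F}\alpha)(x) \;=\; F(x,\alpha(x)) + \frac{\rho^{1-\gamma/k}\,\Psi(x,\alpha(x))}{k\,\Gamma_{k}(\gamma)}\int_{1}^{x}\frac{t^{\rho-1}}{\left(x^{\rho}-t^{\rho}\right)^{1-\gamma/k}}\,G(t,\alpha(t))\,dt.
\]

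First I will check that $\mathscr{F}$ sends $\mathscr{Q}_{r_{0}}$ into itself. By (C1) together with the vanishing conditions $F(x,0)=\Psi(x,0)=G(x,0)=0$, we get the pointwise bounds $|F(x,\alpha(x))|\le c_{1}|\alpha(x)|$, $|\Psi(x,\alpha(x))|\le c_{2}|\alpha(x)|$, $|G(t,\alpha(t))|\le c_{3}|\alpha(t)|$. The substitution $u=x^{\rho}-t^{\rho}$ yields the explicit evaluation $\int_{1}^{x} t^{\rho-1}(x^{\rho}-t^{\rho})^{\gamma/k -1}\,dt = \tfrac{k}{\rho\gamma}(x^{\rho}-1)^{\gamma/k}$, and inserting these estimates into the definition of $\mathscr{F}\alpha$ gives
\[
\|\mathscr{F}\alpha\| \;\le\; r_{0}\!\left[c_{1}+\frac{c_{2}c_{3}\,r_{0}\,\rho^{-\gamma/k}}{\gamma\,\Gamma_{k}(\gamma)}\bigl(\mathbb{T}^{\rho}-1\bigr)^{\gamma/k}\right] \;<\; r_{0}
\]
by (C2). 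Continuity of $\mathscr{F}$ on $\mathscr{Q}_{r_{0}}$ then follows from a standard argument using the Lipschitz hypotheses and the uniform boundedness of the kernel integral just computed.

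The technical heart is the MNC estimate. For $\mathscr{T}\subseteq\mathscr{Q}_{r_{0}}$, $\alpha\in\mathscr{T}$, and $x_{1},x_{2}\in\bar{\mathds{J}}$ with $|x_{1}-x_{2}|\le\delta$, I will decompose $(\mathscr{F}\alpha)(x_{1})-(\mathscr{F}\alpha)(x_{2})$ into three pieces: an $F$-difference bounded by $c_{1}|\alpha(x_{1})-\alpha(x_{2})|+\omega_{F}(\delta,r_{0})$; a $\Psi$-factor difference handled via $|\Psi(x_{1},\alpha(x_{1}))-\Psi(x_{2},\alpha(x_{2}))|$ times the uniform bound on $I(x)=\int_{1}^{x}t^{\rho-1}(x^{\rho}-t^{\rho})^{\gamma/k-1}G(t,\alpha(t))\,dt$; and a kernel-shift piece $\Psi(x_{2},\alpha(x_{2}))\bigl[I(x_{1})-I(x_{2})\bigr]$, further split into an outside integral over $[x_{2},x_{1}]$ and an inside integral driven by the difference $(x_{1}^{\rho}-t^{\rho})^{\gamma/k-1}-(x_{2}^{\rho}-t^{\rho})^{\gamma/k-1}$. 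Taking the supremum over $\alpha\in\mathscr{T}$ and letting $\delta\to 0$, the uniform-continuity contributions of $F,\Psi,G$ on $\bar{\mathds{J}}\times[-r_{0},r_{0}]$ vanish, and the Lipschitz contributions collapse to
\[
\mu_{0}(\mathscr{F}\mathscr{T}) \;\le\; \left[c_{1}+\frac{c_{2}c_{3}\,r_{0}\,\rho^{-\gamma/k}}{\gamma\,\Gamma_{k}(\gamma)}\bigl(\mathbb{T}^{\rho}-1\bigr)^{\gamma/k}\right]\mu_{0}(\mathscr{T}),
\]
so $\mathscr{Q}(\mathscr{F}\mathscr{T})\le m\,\mathscr{Q}(\mathscr{T})$ with $m<1$ by (C2). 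Corollary \ref{c1} then produces a fixed point of $\mathscr{F}$ in $\mathscr{Q}_{r_{0}}$, i.e.\ a continuous solution of (\ref{eq75}); the analogous argument applied to the hatted data gives a solution of (\ref{eq77}).

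The step I expect to be the main obstacle is the kernel-shift part of the MNC computation: the integrand carries a weak singularity of order $(x^{\rho}-t^{\rho})^{\gamma/k-1}$ precisely at the moving upper endpoint, so both the outside integral over $[x_{2},x_{1}]$ and the inside integral with the difference of kernels must be bounded by quantities that are independent of $\alpha\in\mathscr{T}$ and tend to $0$ uniformly with $\delta$. This is where the explicit antiderivative obtained in the self-mapping step is crucial, as it allows dominated convergence to kill these terms cleanly without extra hypotheses on $G$.
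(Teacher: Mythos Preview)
Your proposal is correct and follows essentially the same route as the paper: define the Nemytskii-type operator, use (C1) with the vanishing conditions and the explicit evaluation $\int_{1}^{x}t^{\rho-1}(x^{\rho}-t^{\rho})^{\gamma/k-1}\,dt=\tfrac{k}{\rho\gamma}(x^{\rho}-1)^{\gamma/k}$ to get self-mapping, check continuity, and then run the three-piece decomposition ($F$-difference, $\Psi$-difference times bounded integral, kernel-shift) to obtain the MNC inequality with constant $c_{1}+\frac{c_{2}c_{3}r_{0}\rho^{-\gamma/k}}{\gamma\Gamma_{k}(\gamma)}(\mathbb{T}^{\rho}-1)^{\gamma/k}<1$, concluding via Corollary~\ref{c1}. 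Your observation that the two equations are decoupled and can be treated in parallel is exactly how the paper proceeds as well.
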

	\begin{proof}
	We assume the operator $\mathscr{D}, \mathscr{\bar{D}}: \mathscr{B} \rightarrow \mathscr{B}$  defined by:
		\[(\mathscr{D} \alpha)(x)= F(x,\alpha(x))+\frac{\rho^{1-\frac{\gamma}{k}}\Psi(x,\alpha(x))}{k\Gamma_{k}(\gamma)}\int_{1}^{x}\frac{t^{\rho-1}}{\left(x^{\rho}-t^{\rho} \right)^{1-\frac{\gamma}{k}}}G(t,\alpha(t))dt
		\]
		and
		\[(\mathscr{\bar{D}} \beta)(x)= \hat{F}(x,\beta(x))+\frac{\rho^{1-\frac{\gamma}{k}}\hat{\Psi}(x,\beta(x))}{k\Gamma_{k}(\gamma)}\int_{1}^{x}\frac{t^{\rho-1}}{\left(x^{\rho}-t^{\rho} \right)^{1-\frac{\gamma}{k}}}\hat{G}(t,\beta(t))dt
		\]

		{\bf Step 1:}   We claim that $\mathscr{D}, \mathscr{\bar{D}}$
		maps $\mathscr{Q}_{r_0}$ into $\mathscr{Q}_{r_0}$.  Assume that  $\alpha,\; \beta \in \mathscr{Q}_{r_{0}}$. We now have
		\begin{eqnarray*}
			\left| (\mathscr{D} \alpha)(x)\right|&=&\left|F(x,\alpha(x))+\frac{\rho^{1-\frac{\gamma}{k}}\Psi(x,\alpha(x))}{k\Gamma_{k}(\gamma)}\int_{1}^{x}\frac{t^{\rho-1}}{\left(x^{\rho}-t^{\rho} \right)^{1-\frac{\gamma}{k}}}G(t,\alpha(t))dt\right|\\
			&\leq&\left|F(x,\alpha(x))\right|+\frac{\rho^{1-\frac{\gamma}{k}}\left|\Psi(x,\alpha(x))\right|}{k\Gamma_{k}(\gamma)}\int_{1}^{x}\frac{t^{\rho-1}}{\left(x^{\rho}-t^{\rho} \right)^{1-\frac{\gamma}{k}}}\left|G(t,\alpha(t))\right|dt\\
			&\leq& \left|F(x,\alpha(x))-F(x,0)\right|\\
			&+&\frac{\rho^{1-\frac{\gamma}{k}}\left\lbrace \left|\Psi(x,\alpha(x))-\Psi(x,0)\right|+\left|\Psi(x,0)\right|\right\rbrace }{k\Gamma_{k}(\gamma)}\\
			&&\int_{1}^{x}\frac{t^{\rho-1}}{\left(x^{\rho}-t^{\rho} \right)^{1-\frac{\gamma}{k}}}\left\lbrace \left|G(t,\alpha(t))-G(t,0)\right|+\left|G(t,0)\right|\right\rbrace dt\\
			&\leq&c_{1}\left|\alpha(x)\right|+\frac{\rho^{1-\frac{\gamma}{k}}c_{2} \left|\alpha(x)\right|}{k\Gamma_{k}(\gamma)}\int_{1}^{x}\frac{t^{\rho-1}}{\left(x^{\rho}-t^{\rho} \right)^{1-\frac{\gamma}{k}}}c_{3} \left|\alpha(t)\right| dt\\
			&\leq&c_{1}\parallel \alpha \parallel+\frac{\rho^{1-\frac{\gamma}{k}}c_{2}c_{3}  \parallel \alpha \parallel^{2}}{k\Gamma_{k}(\gamma)}\int_{1}^{x}\frac{t^{\rho-1}}{\left(x^{\rho}-t^{\rho} \right)^{1-\frac{\gamma}{k}}} dt\\
			&\leq&c_{1}\parallel \alpha \parallel+
			\frac{c_{2}c_{3}  \parallel \alpha \parallel^{2}\rho^{-\frac{\gamma}{k}}}{\gamma \Gamma_{k}(\gamma)}\left( \mathbb{T}^{\rho}-1\right)^{\frac{\gamma}{k}}.
		\end{eqnarray*}
		Hence, $\parallel \alpha  \parallel <r_0$ gives
		\[
		\parallel \mathscr{D} \alpha\parallel\leq  c_{1}r_{0}+
		\frac{c_{2}c_{3}  r_{0}^{2}\rho^{-\frac{\gamma}{k}}}{\gamma \Gamma_{k}(\gamma)}\left( \mathbb{T}^{\rho}-1\right)^{\frac{\gamma}{k}}\leq r_{0}
		\]
		It folloews from $(C2)$ that $\mathscr{D}$
		maps $\mathscr{Q}_{r_0}$ into $\mathscr{Q}_{r_0}$.\\
		In the similar manner it can be established that $\mathscr{D}$ maps from $\mathscr{Q}_{r_0}$ to itself.

		{\bf Step 2:} Next we claim that, $\mathscr{D}$ is continuous on $\mathscr{Q}_{r_{0}}.$

		Setting $\delta >0,$ also let $\alpha, \bar{\alpha} \in \mathscr{Q}_{r_{0}}$ such that $\parallel \alpha- \bar{\alpha} \parallel < \delta.$ We now obtain
		\begin{align*}
			&\left|  (\mathscr{D} \alpha)(x)-  (\mathscr{D} \bar{\alpha})(x)\right|\\ &\leq 
			\left|F(x,\alpha(x))-F(x,\bar{\alpha}(x))\right|\\
			&+ \frac{\rho^{1-\frac{\gamma}{k}}}{k\Gamma_{k}(\gamma)}\left|\Psi(x,\alpha(x))\int_{1}^{x}\frac{t^{\rho-1}}{\left(x^{\rho}-t^{\rho} \right)^{1-\frac{\gamma}{k}}}G(t,\alpha(t))dt-\Psi(x,\bar{\alpha}(x))\int_{1}^{x}\frac{t^{\rho-1}}{\left(x^{\rho}-t^{\rho} \right)^{1-\frac{\gamma}{k}}}G(t,\bar{\alpha}(t))dt\right|\\
			&\leq 
			c_{1}\left|\alpha(x)-\bar{\alpha}(x)\right|\\
			&+ \frac{\rho^{1-\frac{\gamma}{k}}}{k\Gamma_{k}(\gamma)}\left|\Psi(x,\alpha(x))\int_{1}^{x}\frac{t^{\rho-1}}{\left(x^{\rho}-t^{\rho} \right)^{1-\frac{\gamma}{k}}}\left\lbrace G(t,\alpha(t))-G(t,\bar{\alpha}(t))\right\rbrace dt\right|\\
				&+ \frac{\rho^{1-\frac{\gamma}{k}}}{k\Gamma_{k}(\gamma)}\left|\left\lbrace \Psi(x,\alpha(x))-\Psi(x,\bar{\alpha}(x))\right\rbrace \int_{1}^{x}\frac{t^{\rho-1}}{\left(x^{\rho}-t^{\rho} \right)^{1-\frac{\gamma}{k}}}G(t,\bar{\alpha}(t))dt\right|\\
			&\leq 
			c_{1}\left|\alpha(x)-\bar{\alpha}(x)\right|+ \frac{\rho^{1-\frac{\gamma}{k}}c_{1}\left|\alpha(x)\right|}{k\Gamma_{k}(\gamma)}\int_{1}^{x}\frac{t^{\rho-1}}{\left(x^{\rho}-t^{\rho} \right)^{1-\frac{\gamma}{k}}}\left| G(t,\alpha(t))-G(t,\bar{\alpha}(t))\right| dt\\
			&+ \frac{\rho^{1-\frac{\gamma}{k}} c_{2}\left|\alpha(x)-\bar{\alpha}(x)\right|}{k\Gamma_{k}(\gamma)} \int_{1}^{x}\frac{t^{\rho-1}}{\left(x^{\rho}-t^{\rho} \right)^{1-\frac{\gamma}{k}}}\left|G(t,\bar{\alpha}(t))\right|dt\\
			&\leq 
			c_{1}\left|\alpha(x)-\bar{\alpha}(x)\right|+ \frac{\rho^{1-\frac{\gamma}{k}}c_{1}\left|\alpha(x)\right|}{k\Gamma_{k}(\gamma)}\int_{1}^{x}\frac{t^{\rho-1}}{\left(x^{\rho}-t^{\rho} \right)^{1-\frac{\gamma}{k}}}c_{3}\left| \alpha(t)-\bar{\alpha}(t)\right| dt\\
			&+ \frac{\rho^{1-\frac{\gamma}{k}} c_{2}\left|\alpha(x)-\bar{\alpha}(x)\right|}{k\Gamma_{k}(\gamma)} \int_{1}^{x}\frac{t^{\rho-1}}{\left(x^{\rho}-t^{\rho} \right)^{1-\frac{\gamma}{k}}}c_{3}\left|\bar{\alpha}(t)\right|dt\\
			&\leq 
			c_{1}\parallel \alpha-\bar{\alpha}\parallel + \frac{\rho^{1-\frac{\gamma}{k}}c_{1}c_{3}\parallel \alpha\parallel \parallel  \alpha-\bar{\alpha}\parallel }{k\Gamma_{k}(\gamma)}\int_{1}^{x}\frac{t^{\rho-1}}{\left(x^{\rho}-t^{\rho} \right)^{1-\frac{\gamma}{k}}}  dt\\
			&+ \frac{\rho^{1-\frac{\gamma}{k}} c_{2}c_{3}\parallel \bar{\alpha}\parallel\parallel \alpha-\bar{\alpha}\parallel }{k\Gamma_{k}(\gamma)} \int_{1}^{x}\frac{t^{\rho-1}}{\left(x^{\rho}-t^{\rho} \right)^{1-\frac{\gamma}{k}}} dt\\			
				&\leq 
			c_{1}\delta + \frac{\rho^{-\frac{\gamma}{k}}c_{1}c_{3}r_{0} \delta }{\gamma\Gamma_{k}(\gamma)}\left( \mathbb{T}^{\rho}-1\right)^{\frac{\gamma}{k}}+ \frac{\rho^{-\frac{\gamma}{k}} c_{2}c_{3}r_{0}\delta }{\gamma\Gamma_{k}(\gamma)} \left( \mathbb{T}^{\rho}-1\right)^{\frac{\gamma}{k}}.
		\end{align*}
		
		As $\delta \rightarrow 0$, we get
		$$\left|  (\mathscr{D} \alpha)(x)-  (\mathscr{D} \bar{\alpha})(x)\right| \rightarrow 0.$$
		\\ which indicates that $\mathscr{D}$ is continuous on $\mathscr{Q}_{r_{0}}.$\\
		In the similar manner it can be established that $\mathscr{D}$ is continuous on $\mathscr{Q}_{r_{0}}.$

		{\bf Step 3:} Here, we estimate $\mathscr{D}$ and $\mathscr{\bar{D}}$ in relation to $\mu_{0}.$ 
		Consider that $\varpi_{\alpha},  \varpi_{\beta}(\neq \emptyset) \subseteq \mathscr{Q}_{r_{0}}.$  Let  $\delta >0$ be arbitrary, Also, we are now choose $\alpha \in \varpi_{\alpha},$ $\beta \in \varpi_{\beta},$ with $x_{1}, x_{2} \in \Bar{\mathds{J}}$ such that $\left| x_{2}-x_{1}\right|\leq \delta $ and $x_{2} \geq x_{1}.$
		\par Now,
		\begin{align*}
			&\left| (\mathscr{D} \alpha)(x_{2})-  (\mathscr{D} \alpha)(x_{1})\right|\\
			&\leq 
			\left|F(x_{2},\alpha(x_{2}))-F(x_{1},\alpha(x_{1}))\right|\\
			&+\frac{\rho^{1-\frac{\gamma}{k}}}{k\Gamma_{k}(\gamma)}\left|\Psi(x_{2},\alpha(x_{2})) \int_{1}^{x_{2}}\frac{t^{\rho-1}}{\left(x_{2}^{\rho}-t^{\rho} \right)^{1-\frac{\gamma}{k}}}G(t,\alpha(t))dt-\Psi(x_{1},\alpha(x_{1}))\int_{1}^{x_{1}}\frac{t^{\rho-1}}{\left(x_{1}^{\rho}-t^{\rho} \right)^{1-\frac{\gamma}{k}}}G(t,\alpha(t))dt\right|\\
				&\leq 
			\left|F(x_{2},\alpha(x_{2}))-F(x_{1},\alpha(x_{1}))\right|\\
			&+\frac{\rho^{1-\frac{\gamma}{k}}}{k\Gamma_{k}(\gamma)}\left|\Psi(x_{2},\alpha(x_{2})) \left\lbrace \int_{1}^{x_{2}}\frac{t^{\rho-1}}{\left(x_{2}^{\rho}-t^{\rho} \right)^{1-\frac{\gamma}{k}}}G(t,\alpha(t))dt-\int_{1}^{x_{1}}\frac{t^{\rho-1}}{\left(x_{1}^{\rho}-t^{\rho} \right)^{1-\frac{\gamma}{k}}}G(t,\alpha(t))dt\right\rbrace \right|\\
				&+\frac{\rho^{1-\frac{\gamma}{k}}}{k\Gamma_{k}(\gamma)}\left|\left\lbrace \Psi(x_{2},\alpha(x_{2}))-\Psi(x_{1},\alpha(x_{1}))\right\rbrace \int_{1}^{x_{1}}\frac{t^{\rho-1}}{\left(x_{1}^{\rho}-t^{\rho} \right)^{1-\frac{\gamma}{k}}}G(t,\alpha(t))dt\right|.
		\end{align*}
		Moreover,
		\begin{align*}
			& 
			\left|F(x_{2},\alpha(x_{2}))-F(x_{1},\alpha(x_{1}))\right|\\
			&\leq\left|F(x_{2},\alpha(x_{2}))-F(x_{1},\alpha(x_{2}))\right|+\left|F(x_{1},\alpha(x_{2}))-F(x_{1},\alpha(x_{1}))\right|\\
			&\leq\left|F(x_{2},\alpha(x_{2}))-F(x_{1},\alpha(x_{2}))\right|+c_{1}\left|\alpha(x_{2})-\alpha(x_{1})\right|\\
			&\leq\mu_{r_{0}}(F, \delta)+c_{1}\mu(\alpha,\delta).
		\end{align*}
			where,
		\[
		\mu_{r_{0}}(F, \delta)=\sup\left\{\begin{array}{ccl}
		\left|F(x_{2},\alpha(x_{2}))-F(x_{1},\alpha(x_{2}))\right|:\left| x_{2}-x_{1}\right|\leq \delta; x_{1},x_{2} \in \Bar{\mathds{J}}, \parallel \alpha \parallel \leq r_{0}
		\end{array}\right\}.
		\]
		and
		\begin{align*}
			&\left|\Psi(x_{2},\alpha(x_{2})) \left\lbrace \int_{1}^{x_{2}}\frac{t^{\rho-1}}{\left(x_{2}^{\rho}-t^{\rho} \right)^{1-\frac{\gamma}{k}}}G(t,\alpha(t))dt-\int_{1}^{x_{1}}\frac{t^{\rho-1}}{\left(x_{1}^{\rho}-t^{\rho} \right)^{1-\frac{\gamma}{k}}}G(t,\alpha(t))dt\right\rbrace \right|\\
			&\leq\left|\Psi(x_{2},\alpha(x_{2}))\right| \left| \int_{1}^{x_{2}}\frac{t^{\rho-1}}{\left(x_{2}^{\rho}-t^{\rho} \right)^{1-\frac{\gamma}{k}}}G(t,\alpha(t))dt-\int_{1}^{x_{1}}\frac{t^{\rho-1}}{\left(x_{1}^{\rho}-t^{\rho} \right)^{1-\frac{\gamma}{k}}}G(t,\alpha(t))dt \right|\\
			&\leq c_{2}r_{0} \left| \int_{1}^{x_{2}}\frac{t^{\rho-1}}{\left(x_{2}^{\rho}-t^{\rho} \right)^{1-\frac{\gamma}{k}}}G(t,\alpha(t))dt-\int_{1}^{x_{1}}\frac{t^{\rho-1}}{\left(x_{1}^{\rho}-t^{\rho} \right)^{1-\frac{\gamma}{k}}}G(t,\alpha(t))dt \right|\\
			&\leq c_{2}r_{0} \left| \int_{1}^{x_{2}}\frac{t^{\rho-1}}{\left(x_{2}^{\rho}-t^{\rho} \right)^{1-\frac{\gamma}{k}}}G(t,\alpha(t))dt-\int_{1}^{x_{2}}\frac{t^{\rho-1}}{\left(x_{1}^{\rho}-t^{\rho} \right)^{1-\frac{\gamma}{k}}}G(t,\alpha(t))dt \right|\\
			&+ c_{2}r_{0} \left| \int_{1}^{x_{2}}\frac{t^{\rho-1}}{\left(x_{1}^{\rho}-t^{\rho} \right)^{1-\frac{\gamma}{k}}}G(t,\alpha(t))dt-\int_{1}^{x_{1}}\frac{t^{\rho-1}}{\left(x_{1}^{\rho}-t^{\rho} \right)^{1-\frac{\gamma}{k}}}G(t,\alpha(t))dt \right|\\
			&\leq c_{2}r_{0} \left| \int_{1}^{x_{2}}\left\lbrace \frac{t^{\rho-1}}{\left(x_{2}^{\rho}-t^{\rho} \right)^{1-\frac{\gamma}{k}}}-\frac{t^{\rho-1}}{\left(x_{1}^{\rho}-t^{\rho} \right)^{1-\frac{\gamma}{k}}}\right\rbrace G(t,\alpha(t))dt \right|\\
			&+ c_{2}r_{0} \left| \int_{x_{2}}^{x_{1}}\frac{t^{\rho-1}}{\left(x_{1}^{\rho}-t^{\rho} \right)^{1-\frac{\gamma}{k}}}G(t,\alpha(t))dt \right|\\
			&\leq c_{2}c_{3}r_{0}^{2}\frac{k}{\rho\gamma}\left[2\left(x_{1}^{\rho}-x_{2}^{\rho} \right)^{\frac{\gamma}{k}}+\left(x_{2}^{\rho}-1 \right)^{\frac{\gamma}{k}} - \left(x_{1}^{\rho}-1 \right)^{\frac{\gamma}{k}} \right].
		\end{align*}
		also,
		\begin{align*}
			&\left|\left\lbrace \Psi(x_{2},\alpha(x_{2}))-\Psi(x_{1},\alpha(x_{1}))\right\rbrace \int_{1}^{x_{1}}\frac{t^{\rho-1}}{\left(x_{1}^{\rho}-t^{\rho} \right)^{1-\frac{\gamma}{k}}}G(t,\alpha(t))dt\right|\\
			&\leq\left| \Psi(x_{2},\alpha(x_{2}))-\Psi(x_{1},\alpha(x_{1}))\right| \int_{1}^{x_{1}}\frac{t^{\rho-1}}{\left(x_{1}^{\rho}-t^{\rho} \right)^{1-\frac{\gamma}{k}}}\left|G(t,\alpha(t))\right|dt\\
			&\leq\left\lbrace\left| \Psi(x_{2},\alpha(x_{2}))-\Psi(x_{2},\alpha(x_{1}))\right|+\left| \Psi(x_{2},\alpha(x_{1}))-\Psi(x_{1},\alpha(x_{1}))\right|\right\rbrace \int_{1}^{x_{1}}\frac{t^{\rho-1}}{\left(x_{1}^{\rho}-t^{\rho} \right)^{1-\frac{\gamma}{k}}}\left|G(t,\alpha(t))\right|dt\\
			&\leq\left\lbrace c_{2}\left| \alpha(x_{2})-\alpha(x_{1})\right|+\Psi_{r_{0}}( \delta)\right\rbrace \frac{c_{3}  r_{0}k}{\gamma \rho}\left( \mathbb{T}^{\rho}-1\right)^{\frac{\gamma}{k}}.
		\end{align*}
		where
		\[
		\Psi_{r_{0}}( \delta)=\sup\left\{\begin{array}{ccl}
			\left|\Psi(x_{2},\alpha(x_{1}))-\Psi(x_{1},\alpha(x_{1}))\right|:\left| x_{2}-x_{1}\right|\leq \delta; x_{1},x_{2} \in \Bar{\mathds{J}}, \parallel \alpha \parallel \leq r_{0}
		\end{array}\right\}.
		\].
	Therefore,
	\begin{align*}
		&\left| (\mathscr{D} \alpha)(x_{2})-  (\mathscr{D} \alpha)(x_{1})\right|\\
		&\leq 
		\mu_{r_{0}}(F, \delta)+c_{1}\mu(\alpha,\delta)+\frac{c_{2}c_{3}r_{0}^{2}\rho^{-\frac{\gamma}{k}}}{\gamma\Gamma_{k}(\gamma)}\left[2\left(x_{1}^{\rho}-x_{2}^{\rho}\right)^{\frac{\gamma}{k}}+\left(x_{2}^{\rho}-1 \right)^{\frac{\gamma}{k}} - \left(x_{1}^{\rho}-1 \right)^{\frac{\gamma}{k}} \right]\\
		&+\frac{c_{3}  r_{0}\rho^{-\frac{\gamma}{k}}}{\gamma\Gamma_{k}(\gamma)}\left\lbrace c_{2}\mu(\alpha,\delta)+\Psi_{r_{0}}( \delta)\right\rbrace \left( \mathbb{T}^{\rho}-1\right)^{\frac{\gamma}{k}}.
	\end{align*}
	i.e.
	\begin{align*}
		\mu(\mathscr{D} \alpha,\delta)
		&\leq 
		\mu_{r_{0}}(F, \delta)+c_{1}\mu(\alpha,\delta)+\frac{c_{2}c_{3}r_{0}^{2}\rho^{-\frac{\gamma}{k}}}{\gamma\Gamma_{k}(\gamma)}\left[2\left(x_{1}^{\rho}-x_{2}^{\rho}\right)^{\frac{\gamma}{k}}+\left(x_{2}^{\rho}-1 \right)^{\frac{\gamma}{k}} - \left(x_{1}^{\rho}-1 \right)^{\frac{\gamma}{k}} \right]\\
		&+\frac{c_{3}  r_{0}\rho^{-\frac{\gamma}{k}}}{\gamma\Gamma_{k}(\gamma)}\left\lbrace c_{2}\mu(\alpha,\delta)+\Psi_{r_{0}}( \delta)\right\rbrace \left( \mathbb{T}^{\rho}-1\right)^{\frac{\gamma}{k}}.
	\end{align*}
	which yields
	\begin{align*}
		\mu(\mathscr{D} \varpi_{\alpha},\delta)
		&\leq 
		\mu_{r_{0}}(F, \delta)+c_{1}\mu(\varpi_{\alpha},\delta)+\frac{c_{2}c_{3}r_{0}^{2}\rho^{-\frac{\gamma}{k}}}{\gamma\Gamma_{k}(\gamma)}\left[2\left(x_{1}^{\rho}-x_{2}^{\rho}\right)^{\frac{\gamma}{k}}+\left(x_{2}^{\rho}-1 \right)^{\frac{\gamma}{k}} - \left(x_{1}^{\rho}-1 \right)^{\frac{\gamma}{k}} \right]\\
		&+\frac{c_{3}  r_{0}\rho^{-\frac{\gamma}{k}}}{\gamma\Gamma_{k}(\gamma)}\left\lbrace c_{2}\mu(\varpi_{\alpha},\delta)+\Psi_{r_{0}}( \delta)\right\rbrace \left( \mathbb{T}^{\rho}-1\right)^{\frac{\gamma}{k}}.
	\end{align*}
	As $\;\delta\; \rightarrow \;0,$ then $x_{2} \rightarrow x_{1}$ and by the uniform continuity of $F$ and $\Psi$ such  that \\ $\mu_{r_{0}}(F, \delta)\rightarrow\; 0$ and $\Psi_{r_{0}}( \delta)\rightarrow\; 0,$ as $\;\delta\; \rightarrow \;0$ we obtain
	\begin{align*}
		\mu_{0}(\mathscr{D} \varpi_{\alpha})
		&\leq \left[ c_{1}
		+\frac{c_{2}c_{3}  r_{0}\rho^{-\frac{\gamma}{k}}}{\gamma\Gamma_{k}(\gamma)}  \left( \mathbb{T}^{\rho}-1\right)^{\frac{\gamma}{k}}\right]\mu_{0}(\varpi_{\alpha}) .
	\end{align*}
	Similarly we obtain
	\begin{align*}
		\mu_{0}(\mathscr{\bar{D}} \varpi_{\beta})
		&\leq \left[ \hat{c}_{1}
		+\frac{\hat{c}_{2}\hat{c}_{3}  r_{0}\rho^{-\frac{\gamma}{k}}}{\gamma\Gamma_{k}(\gamma)}  \left( \mathbb{T}^{\rho}-1\right)^{\frac{\gamma}{k}}\right]\mu_{0}(\varpi_{\beta}) .
	\end{align*}
		\par Setting 
		\[
	\varepsilon=max\left\lbrace c_{1}
	+\frac{c_{2}c_{3}  r_{0}\rho^{-\frac{\gamma}{k}}}{\gamma\Gamma_{k}(\gamma)}  \left( \mathbb{T}^{\rho}-1\right)^{\frac{\gamma}{k}}, \hat{c}_{1}
	+\frac{\hat{c}_{2}\hat{c}_{3}  r_{0}\rho^{-\frac{\gamma}{k}}}{\gamma\Gamma_{k}(\gamma)}  \left( \mathbb{T}^{\rho}-1\right)^{\frac{\gamma}{k}}\right\rbrace 	<1.
		\]
		Hence, 
		\begin{align*}
			\mu_{0}(\mathscr{D} \varpi_{\alpha}) + \mu_{0}(\mathscr{\bar{D}} \varpi_{\beta})
			&\leq \delta[\mu_{0}(\varpi_{\alpha}) +\mu_{0}(\varpi_{\beta})] .
		\end{align*}
		Hereby Corollary \ref{c1}, $\mathscr{D}$ and $\mathscr{\bar{D}}$ have fixed point in $\varpi_{\alpha}, \varpi_{\beta} \subseteq \mathscr{Q}_{r_{0}}.$ 
		\\Which implies the equations (\ref{eq75}) and (\ref{eq77}) have solutions in $\mathscr{B}.$
	\end{proof}

	\begin{ill}\label{ex7}{\rm
			Take into consideration the following fractional integral equations:
			\begin{equation}\label{eq76}
				\alpha(x)=\frac{\left|\alpha(x)\right|}{6} +\frac{9\left|\alpha(x)\right|}{\Gamma_{1/3}(2/3)}\int_{1}^{x}\frac{\left(x^{1/3}-t^{1/3} \right)}{t^{2/3}}\frac{\alpha(x)}{3+\log{x}}dt 
			\end{equation}
			and
			\begin{equation}\label{eq78}
				\beta(x)=\frac{\left|\beta(x)\right|}{6} +\frac{9\left|\beta(x)\right|}{\Gamma_{1/3}(2/3)}\int_{1}^{x}\frac{\left(x^{1/3}-t^{1/3} \right)}{t^{2/3}}\frac{\beta(x)}{2+x}dt 
			\end{equation}
			for $x\in [1,3]=\Bar{\mathds{J}}.$}
		\end{ill}
	\noindent  Here, \[ \gamma=2/3,\; k=1/3,\; \rho= 1/3\]\\
	and
	\[
	F(x,\alpha(x))=\hat{F}(x,\beta(x))=\frac{\left|\alpha(x)\right|}{6},
	\]
	\[
	\Psi(x,\alpha(x))=\hat{\Psi}(x,\beta(x))=\left|\alpha(x)\right|,
	\]
	\[
	G(t,\alpha(t))=\frac{\alpha(x)}{3+\log{x}},
	\]
	\[
	\hat{G}(t,\beta(t))=\frac{\beta(x)}{2+x}.
	\]
	Therefore
	\[
	\left| F(x,\alpha(x))-F(x,\bar{\alpha}(x))\right|\leq \frac{1}{6} \left| \alpha(x)-\bar{\alpha}(x)\right|
	\]
	\[
	\left| \hat{F}(x,\beta(x))-\hat{F}(x,\bar{\beta}(x))\right|\leq \frac{1}{6} \left| \beta(x)-\bar{\beta}(x)\right|
	\]
	\[
	\left| \Psi(x,\alpha(x))-\Psi(x,\bar{\alpha}(x))\right|\leq  \left| \alpha(x)-\bar{\alpha}(x)\right|
	\]
	\[
	\left| \hat{\Psi}(x,\beta(x))-\hat{\Psi}(x,\bar{\beta}(x))\right|\leq  \left| \beta(x)-\bar{\beta}(x)\right|
	\]
	\[
	\left| G(x,\alpha(x))-G(x,\bar{\alpha}(x)) \right|\leq \frac{1}{3}  \left| \alpha(x)-\bar{\alpha}(x)\right|
	\]
	and
	\[
	\left|  \hat{G}(x,\beta(x))-\hat{G}(x,\bar{\beta}(x)) \right|\leq \frac{1}{3}  \left| \beta(x)-\bar{\beta}(x)\right|
	\]
	Also, 
	\[
	F(x,0)=\hat{F}(x,0)=\Psi(x,0)=\hat{\Psi}(x,0)=G(x,0)=\hat{G}(x,0)=0.
	\]
Then, 
\[
{c}_{1}=\hat{c}_{1}=\frac{1}{6},\;{c}_{2}=\hat{c}_{2}=1,\;{c}_{3}=\hat{c}_{3}=\frac{1}{3} .
\]
Substituting these vaues in the first inequality of assumption (C2), we get
\begin{align*}
	&\frac{1}{6}+
	\frac{1.\frac{1}{3}  r_{0}(\frac{1}{3})^{-2}}{\frac{2}{3} \Gamma_{1/3}(2/3)}\left( 3^{1/3}-1\right)^{2}< 1\\
	&\Rightarrow\frac{1}{6}+
	\frac{9 r_{0}}{2 \Gamma_{1/3}(2/3)}\left( 3^{1/3}-1\right)^{2}< 1\\
	&\Rightarrow\frac{1}{6}+
	\frac{9 r_{0}}{2 (2.4047)}\left(0.5358\right)< 1\\
	& \Rightarrow r_{0}< 0.83.
\end{align*}
Similarly, from the second inequality of assumption (C2), we get
\begin{equation*}
	r_{0}< 0.83.
\end{equation*}
	However, the assumption $(C2)$ is also satisfied for $r_{0}=0.83.$

	Consequently, we have achieved all of the assumptions from $(C1)$ to $(C2)$ in the Theorem \ref{T74} .\\
	From Theorem \ref{T74} , we can say that the equations (\ref{eq76}) and (\ref{eq78}) have solutions in $\mathscr{B}=\Bar{\mathds{C}}(\Bar{\mathds{J}}).$
	
	\section{Conclusion}
	Using generalized DFPT along with $\mathbb{H}$-class mappings, we can derive new results concerning fixed point theorems. This approach shows that Darbo's fixed point theorem is effective in establishing important fixed point results, particularly in proving the existence of solutions for a system of  $(k,\rho)$-fractional Hilfer integral equations.

\section{ Data Availability Statement:}
No data were used to support this study.

\section{Competing Interest}
The authors declare that there is not any competing interest regarding the publication of this manuscript.

\section{Author contributions} All authors contributed equally to the writing of this paper. All authors read and
approved the final manuscript.

\section{Funding}
This research received external funding.

	\section{Acknowledgement}
      The authors acknowledge the financial support from the Department of Biotechnology (DBT), Government of India, under the Star College Scheme [HRD-11011/17/2023-HRD-DBT].

\end{document}